\begin{document}

\newtheorem {thm}{Theorem}[section]
\newtheorem{corr}[thm]{Corollary}
\newtheorem {alg}[thm]{Algorithm}
\newtheorem*{thmstar}{Theorem}
\newtheorem{prop}[thm]{Proposition}
\newtheorem*{propstar}{Proposition}
\newtheorem {lem}[thm]{Lemma}
\newtheorem*{lemstar}{Lemma}
\newtheorem{conj}[thm]{Conjecture}
\newtheorem{ques}[thm]{Question}
\newtheorem*{conjstar}{Conjecture}
\theoremstyle{remark}
\newtheorem{rem}[thm]{Remark}
\newtheorem{np*}{Non-Proof}
\newtheorem*{remstar}{Remark}
\theoremstyle{definition}
\newtheorem{defn}[thm]{Definition}
\newtheorem*{defnstar}{Definition}
\newtheorem{exam}[thm]{Example}
\newtheorem*{examstar}{Example}
\newcommand{\pd}[2]{\frac{\partial #1}{\partial #2}}
\newcommand{\pdtwo}[2]{\frac{\partial^2 #1}{\partial #2^2}}
\def\Ind{\setbox0=\hbox{$x$}\kern\wd0\hbox to 0pt{\hss$\mid$\hss} \lower.9\ht0\hbox to 0pt{\hss$\smile$\hss}\kern\wd0}
\def\Notind{\setbox0=\hbox{$x$}\kern\wd0\hbox to 0pt{\mathchardef \nn=12854\hss$\nn$\kern1.4\wd0\hss}\hbox to 0pt{\hss$\mid$\hss}\lower.9\ht0 \hbox to 0pt{\hss$\smile$\hss}\kern\wd0}
\def\ind{\mathop{\mathpalette\Ind{}}}
\def\nind{\mathop{\mathpalette\Notind{}}} 
\newcommand{\m}{\mathbb }
\newcommand{\mc}{\mathcal }
\newcommand{\mf}{\mathfrak }

\title{Completeness in Partial Differential Fields}
\author{James Freitag}

\address{freitagj@gmail.com \\
Department of Mathematics, Statistics, and Computer Science\\
University of Illinois at Chicago\\
322 Science and Engineering Offices (M/C 249)\\
851 S. Morgan Street\\
Chicago, IL 60607-7045 }
\begin{abstract} We study completeness in partial differential varieties. We generalize many of the results of Pong to the partial differential setting. In particular, we establish a valuative criterion for differential completeness and use it to give a new class of examples of complete partial differential varieties. In addition to completeness we prove some new embedding theorems for differential algebraic varieties. We use methods from both differential algebra and model theory.
\end{abstract}

\maketitle

Completeness is a fundamental notion in algebraic geometry. In this paper, we examine an analogue in differential algebraic geometry. The paper builds on Pong's \cite{PongDiffComplete2000} and Kolchin's \cite{KolchinDiffComp} work on differential completeness in the case of differential varieties over ordinary differential fields and generalizes to the case differential varieties over partial differential fields with finitely many commuting derivations. Many of the proofs generalize in the straightforward manner, given that one sets up the correct definitions and attempts to prove the correct analogues of Pong's or Kolchin's results. Of course, some of the results are harder to prove because our varieties may be infinite differential transcendence degree. Nevertheless, Proposition \ref{proj} generalizes a theorem of \cite{PongRank} even when we restrict to the ordinary case. The proposition also generalizes a known result projective varieties. Many of the results are model-theoretic in nature or use model-theoretic tools.

The model theory of partial differential fields was developed in \cite{McGrail}. For a recent alternate (geometric) axiomatization of partial differentially closed fields, see \cite{Omar}. For a reference in differential algebra, we suggest \cite{KolchinDAAG} and \cite{MMP}. The differential varieties we consider will be embedded in projective space. Generalizations to differential schemes are of interest, but are not treated here. Pillay also considers differential completeness for a slightly different category in \cite{PillayDvar}. Though Pillay's conditions for differential completeness are implied by the conditions here, their precise relationship is not clear. Generalizing this work to the difference-differential topology is of interest, but there are important model theoretic obstacles. Specifically, when working in the setting of \cite{Medina}, we would not have quantifier elimination. We would also leave the model theoretic setting of superstability for the more general setting of supersimplicity. That category is of particular interest because we know that the natural analogues of questions \ref{Q1} and \ref{Q2} are actually distinct. 

In this paper, $F \models DCF_{0,m}.$ $C_F$ will denote the field of absolute constants of $F,$ that is, the intersection of the kernels the derivations $\delta \in \Delta.$ We will only consider differential fields in which the derivations commute. Throughout, $\mc U$ will be a very saturated $\Delta$-closed field. By using model-theoretic tools, Pong proved that every complete differential variety (over an ordinary differential field) is isomorphic to a complete subset of $\m A^1.$ We prove this result in the partial case, and give a valuative criterion for $\Delta$-completeness. More specifically, in section one, we set up the basic definitions and establish some simple propositions. Section two is devoted to an example of Ellis Kolchin, which is used later as well. Section three is devoted to some embedding theorems for differential algebraic varieties. In section four we establish the valuative criterion mentioned above. In section five we use this criterion to establish a wide class of new examples.

As $DCF_{0,m}$ has \emph{quantifier elimination} we have a correspondence between definable sets, (generic) tuples in field extensions lying on the variety, and types. Given a type $p \in S(K),$ we have a corresponding differential radical ideal via
$$p \mapsto I_p = \{ f \in K\{y \} \, | \, f(y)=0 \in p \}$$
Of course, the corresponding variety is simply the zero set of $I_p.$ So, when considering model theoretic ranks on types like Lascar rank (denoted $RU(p)$) we will write $RU(V)$ for a definable set (whose Kolchin closure is irreducible) for $RU(p)$ where $p$ is a type of maximal Lascar rank in $V$. We should note that some care is required, since the model theoretic ranks are always invariant under taking Kolchin closure. See \cite{FGenerics} for an example. Of course, sometimes it will be convenient to consider tuples in field extensions, in which case our notation using also uses the correspondence between definable sets, types, and tuples in field extensions. 

\section{Definitions}
A subset of $\m A^n$ is \emph{$\Delta$-closed} if it is the zero set of a collection of $\Delta$-polynomials in $n$ variables. We use $F \{ y_1 , \ldots , y_n \}$ to denote the ring of \emph{$\Delta$-polynomials} over $F$ in $y_, \ldots , y_n.$  For a thorough development, see \cite{KolchinDAAG} or \cite{MMP}. A (non-constant) $\Delta$-polynomial in $F\{ y_0 , \ldots , y_n \}$ is \emph{$\Delta$-homogeneous of degree d} if 
$$f(t y_0, \ldots t y_n \} = t^d f(y_0 , \ldots , y_n ),$$ 
for a $\Delta$-transcendental $t$ over $F \{y_0 , \ldots , y_n \}.$ One can easily homogenize an arbitrary $\Delta$-polynomial. Homogenization in the partial differential case works identically to the ordinary case. For details and examples see \cite{PongDiffComplete2000}. 

$\Delta$-closed subsets of $\m P^n$ are the zero sets of collections of homogeneous differential polynomials in $F \{y_0 , \ldots , y_n \}.$ $\Delta$-closed subsets of $\m P^n  \times \m A^m$ are given by the zero sets of collections of differential polynomials in $F \{ y_0 , \ldots , y_n, z_1, \ldots , z_m \}$ which are homogeneous in $\bar y.$ Usually we will consider \emph{irreducible} $\Delta$-closed sets, that is, those which are not the union of finitely many proper $\Delta$-closed subsets.

\begin{defn}
A $\Delta$-closed $X \subseteq \m P^n$ is $\Delta$-complete if the second projection $\pi_2: X \times Y \rightarrow Y$ is a $\Delta$-closed map for every quasi projective $\Delta$-variety, $Y.$ 
\end{defn}
It is not a restriction to consider only irreducible $\Delta$-closed sets. To see this, note that in proving that $X \times Y \rightarrow Y$ is $\Delta$-closed, it is enough to prove that the map is $\Delta$-closed on each irreducible component of $X.$ 

\begin{prop}\label{basics} If $X$ is $\Delta$-complete and $Y$ is a quasi projective $\Delta$-variety, 
\begin{enumerate}
\item Suppose $f: X \rightarrow Y$ continuous. Then $f(X)$ is $\Delta$-closed in $Y$ and $\Delta$-complete. 
\item Any $\Delta$-closed subset of $X$ is $\Delta$-complete. 
\item Suppose that $Y$ is $\Delta$-complete. Then $X \times Y$ is $\Delta$-complete. 
\end{enumerate}
\end{prop}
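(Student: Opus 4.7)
The plan is to mirror the classical algebraic-geometry proofs, checking at each step that the $\Delta$-closed-map hypotheses in the definition of $\Delta$-completeness supply exactly what is needed. Throughout I will use the standing observation that a product of quasi-projective $\Delta$-varieties is again quasi-projective (via a Segre-style embedding of $\m P^n \times \m P^m$ into $\m P^{(n+1)(m+1)-1}$), and that preimages of $\Delta$-closed sets under continuous maps are $\Delta$-closed.

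For (1), I would exhibit $f(X)$ as a projection. Let $\Gamma_f = \{(x,y) \in X \times Y : y = f(x)\}$, which is $\Delta$-closed because it is the preimage of the diagonal $\Delta_Y \subseteq Y \times Y$ under the continuous map $(f, \mathrm{id}): X \times Y \to Y \times Y$. Since $X$ is $\Delta$-complete, $\pi_2(\Gamma_f) = f(X)$ is $\Delta$-closed in $Y$. To see that $f(X)$ is itself $\Delta$-complete, take a quasi-projective $\Delta$-variety $Z$ and a $\Delta$-closed $W \subseteq f(X) \times Z$. Pulling back by $f \times \mathrm{id}_Z$ gives a $\Delta$-closed subset $W' \subseteq X \times Z$, and $\pi_Z(W') = \pi_Z(W)$ because $f$ surjects onto $f(X)$; the former is $\Delta$-closed by completeness of $X$.

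For (2), if $X' \subseteq X$ is $\Delta$-closed and $Y$ is quasi-projective, then any $\Delta$-closed $W \subseteq X' \times Y$ is also $\Delta$-closed in $X \times Y$ (since $X' \times Y$ is $\Delta$-closed in $X \times Y$). Completeness of $X$ then gives $\pi_2(W)$ $\Delta$-closed in $Y$. For (3), given a quasi-projective $Z$, factor the projection as
\[
(X \times Y) \times Z \;\xrightarrow{\pi}\; Y \times Z \;\xrightarrow{\pi'}\; Z,
\]
where $Y \times Z$ is quasi-projective by the Segre remark. The first map is $\Delta$-closed by $\Delta$-completeness of $X$ applied to $Y \times Z$, and the second is $\Delta$-closed by $\Delta$-completeness of $Y$ applied to $Z$; composition gives the result.

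None of the three steps is serious; the only points where one must exercise a little care are (a) checking that the graph $\Gamma_f$ really is $\Delta$-closed, which reduces to the diagonal being $\Delta$-closed in $Y \times Y$ (true for quasi-projective $\Delta$-varieties), and (b) verifying that a product of quasi-projective $\Delta$-varieties remains in the allowable test category for the definition of $\Delta$-completeness. Both are routine once the $\Delta$-projective framework of the previous paragraphs is in place, so the whole proposition is essentially a bookkeeping exercise transferring the classical arguments to the $\Delta$-setting.
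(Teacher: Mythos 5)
Your proof is correct and follows essentially the same route as the paper: the graph/diagonal argument plus pullback along $f \times \mathrm{id}$ for (1), viewing closed subsets of $X' \times Y$ as closed in $X \times Y$ for (2), and factoring $(X \times Y) \times Z \to Y \times Z \to Z$ as a composition of closed maps for (3). Your write-up is in fact slightly more explicit than the paper's (e.g.\ spelling out why $\pi_Z(W') = \pi_Z(W)$ and why $Y \times Z$ stays in the quasi-projective test category), but there is no substantive difference in approach.
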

\begin{proof} 
Let $f: X \rightarrow Y \subseteq \m P^m.$ Then $f \times id: X \times \m P^m \rightarrow \m P^m \times \m P^m$ is a continuous map. The diagonal of $\m P^m \times \m P^m$ is $\Delta$-closed. By virtue of the completeness of $X,$ $\pi_2(gr(f))$ is $\Delta$-closed. This is $f(X).$ Now, we get the following commuting diagram, giving the completeness of the image, $f(X).$   
$$
\xymatrix{
& X \times Y \ar[r]^{f \times id} \ar[d]_{\pi_2} &  f(X) \times Y \ar[ld]^{\pi_2} & \\
 &   Y & &  
}
$$
For 2), simply note that if $Z$ is and $\Delta$-closed subset of $X,$ then we have the natural injective map $Z \times Y \rightarrow X \times Y.$ Further, the $\pi_2$ projection map clearly factors through this map. So, $Z$ must be $\Delta$-complete. Similarly, for 3), we can simply note that if we have $X \times Y \times Z,$ then the projection $X \times Y \times Z \rightarrow Y \times Z$ is closed by the completeness of $X.$ $Y \times Z \rightarrow Z$ is closed by the completeness of $Y$. The composition of closed maps is closed. 
\end{proof}

When we wish to verify that a differential algebraic variety, $X$, is complete, we need only show that $$\pi : X \times Y \rightarrow Y$$ is $\Delta$-closed for affine $Y.$ This fact is true for the same reason as in the case of algebraic varieties. $\Delta$-closedness is a local condition, so one should cover $Y$ by finitely many copies of $\m A^m$ for some $m$ and verify the condition on each of the affine pieces. 


\section{$\m P^n$ is not $\Delta$-complete}

There are more closed sets in Kolchin topology than in the Zariski topology, so in the differential setting, there are more closed sets in both the image spaces of the projection map (which makes completeness easier to achieve) and more closed sets in the domain (making completeness harder to achieve).  Though the $\Delta$-topology is richer than the Kolchin topology for ordinary differential fields, Pong's exposition \cite{PongDiffComplete2000} of Kolchin's theorem that $\m P^n$ is not complete holds in this setting. The techniques are model-theoretic, as in \cite{PongDiffComplete2000}, but use the model theory of $\Delta$-fields. 

Consider, for some $\delta \in \Delta$, the closed set in $\m P^1 \times \m A^1$ given by solutions to the equations
\begin{eqnarray}
z (\delta y)^2 + y^4 -1 =0 \\
2 z \delta^2 y + \delta z \delta y +4y^3 =0
\end{eqnarray}
Note that the projective closure with respect to the $y$ variable, does not contain the point at infinity. So, we can work with the above local equations. Let $b \in \mc U$ be a $\Delta$-transcendental over $F.$ Then, we have a solution, in $\mc U,$ to the equation $$b (\delta y)^2 + y^4 -1 =0$$ 
Further, we can demand that $4y^3-1 \neq 0.$ Call this solution $a.$  This means that $\delta a \neq 0.$ But, since we chose $b$ to be a $\Delta$-transcendental over $F,$ we know, by quantifier elimination, that if $\pi_2 Z$ is $\Delta$-closed, then it is all of $\m A^1.$ But $\pi_2 Z$ can not be all of $\m A^1,$ since $\pi_2 Z$ can not contain $0.$ Thus, $\pi_2 Z$ is not $\Delta$-closed and $\m P^1$ is not $\Delta$-complete. Since $\m P^1$ is a $\Delta$-closed subset of $m P^n$ for any $n,$ we have the following result of Kolchin,  

\begin{prop} $\m P^n$ is not $\Delta$-complete. 
\end{prop}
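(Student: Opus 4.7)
The plan proceeds in three stages: reduce to $n = 1$, construct an explicit $Z \subseteq \mathbb{P}^1 \times \mathbb{A}^1$ witnessing the failure of completeness, and verify that $\pi_2(Z)$ is not $\Delta$-closed. For the reduction, I would embed $\mathbb{P}^1$ as a $\Delta$-closed subvariety of $\mathbb{P}^n$ (for example, as the locus $y_2 = \cdots = y_n = 0$) and apply Proposition \ref{basics}(2): $\Delta$-completeness passes to $\Delta$-closed subsets, so it suffices to prove that $\mathbb{P}^1$ is not $\Delta$-complete.

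For the concrete example I would take $Y = \mathbb{A}^1$ and let $Z$ be the common zero locus of equations (1) and (2) displayed above. Two observations underpin this choice. First, the homogenization of (1) in the $y$-variable does not vanish at the point at infinity of $\mathbb{P}^1$, so working in the affine chart captures all of $Z \subseteq \mathbb{P}^1 \times \mathbb{A}^1$. Second, (2) is obtained by applying $\delta$ to (1) and dividing through by $\delta y$; consequently, any $(y,z)$ satisfying (1) with $\delta y \neq 0$ lies automatically in $Z$. To show $\pi_2(Z)$ is not $\Delta$-closed, I would pick $b \in \mathcal{U}$ differentially transcendental over $F$ and produce $a \in \mathcal{U}$ satisfying $b(\delta a)^2 + a^4 - 1 = 0$ with $\delta a \neq 0$. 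Existence of such an $a$ is supplied by the axioms of $DCF_{0,m}$ together with saturation of $\mathcal{U}$: the partial type over $F\langle b \rangle$ enforcing (1), $a^4 \neq 1$, and $\delta a \neq 0$ is consistent, as one solves $(\delta a)^2 = (1 - a^4)/b$ in a suitable $\Delta$-extension and propagates the derivations. By the prolongation remark, $(a,b) \in Z$, so $b \in \pi_2(Z)$.

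For the conclusion, suppose toward a contradiction that $\pi_2(Z)$ is $\Delta$-closed. It is $F$-definable and contains $b$, which realizes the generic type of $\mathbb{A}^1$ over $F$; quantifier elimination in $DCF_{0,m}$ then forces $\pi_2(Z) = \mathbb{A}^1$. But $0 \notin \pi_2(Z)$: setting $z = 0$ in (1) yields $y^4 = 1$, making $y$ algebraic over $\mathbb{Q}$ and therefore $\delta y = 0$; equation (2) then reduces to $4y^3 = 0$, which is incompatible with $y^4 = 1$. This contradiction finishes the proof. The main technical subtlety is the existence of the witness $a$: one must verify that the prescribed partial type is consistent over $F\langle b \rangle$, which reduces to a routine extension-of-derivations computation in the partial differential setting. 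Everything else is a direct application of quantifier elimination and the bookkeeping of Proposition \ref{basics}(2).
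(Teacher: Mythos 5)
Your proof is correct and follows essentially the same route as the paper: reduce to $\m P^1$ via Proposition \ref{basics}(2), use Kolchin's pair of equations (1)--(2) in $\m P^1 \times \m A^1$, produce a point over a $\Delta$-transcendental $b$ with $\delta a \neq 0$, and conclude by quantifier elimination together with the observation that $0 \notin \pi_2(Z)$. You merely spell out details the paper leaves implicit (why $(a,b)$ satisfies (2), why $z=0$ is excluded, and the consistency of the type giving the witness $a$), which is fine.
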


There are infinite rank definable subfields in partial differential fields (they are all given as the constant field of a set of independent definable derivations, see \cite{SuerThesis}). Suppose the set of definable derivations of cutting out the constant field, $K_0$,  is of size $m-m_1.$ $\m P^n (K_0)$ is not $\Delta$-complete. To see this, one can simply repeat the above techniques in a model of $DCF_{0,m_1}$ for $m_1 <m$.  

\section{Embedding Theorems}

Next, we will use the model-theoretic tool of Lascar rank to show that every $\Delta$-complete set can be embedded in $\m A^1.$ We may identify hypersurfaces of degree d in $\mathbb{P}^n$ with points in $\mathbb{P}^{\binom{d+n}{d}-1}$ via the following correspondence, $$\bar a=[a_1 \ldots a_{\binom{d+n}{d}}] \in \mathbb{P}^{\binom{d+n}{d}-1} \leftrightarrow  H_{\bar a}=Z(\sum_{i=1}^{\binom{d+n}{d}}a_i M_i )$$ where $M_i$ is the ith monomial in $x_0 \ldots x_n$ ordered lexicographically. Notice the unusual numbering on the parameter space of $a_i's.$ This follows Hartshorne \cite{Hartshorne}. We say that a hypersurface is generic if the associated $\bar a$ is a tuple of differential transcendentals over $F.$ This is equivalent to saying that $RU(a/F)=\omega^m \cdot N.$
\begin{prop}\label{Avoid}
Suppose $X \subset \mathbb{P}^n$ is a definable set of Lascar rank less than $\omega^m.$ Then any generic hypersurface does not intersect X. 
\end{prop}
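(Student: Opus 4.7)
The plan is to argue by contradiction using Lascar rank additivity. Suppose some $\bar b \in X$ lies on the generic hypersurface $H_{\bar a}$, so that $\sum_{i=1}^{\binom{d+n}{d}} a_i M_i(\bar b) = 0$ in $\mc U$. Because $\bar b \in \mathbb P^n$, some homogeneous coordinate $b_j$ is nonzero; evaluating the pure monomial $x_j^d$ at $\bar b$ then gives $b_j^d \neq 0$, so the vanishing relation above is a \emph{nontrivial} linear equation in the $a_i$ with coefficients in $F\langle \bar b\rangle$. Therefore $\bar a$ is forced to lie in a projective hyperplane of $\mathbb P^N$ defined over $F\langle \bar b\rangle$, which yields the key bound
$$RU(\bar a / F\langle \bar b\rangle) \leq \omega^m \cdot (N-1).$$
This uses the fact that in $DCF_{0,m}$ a single $\Delta$-transcendental has Lascar rank exactly $\omega^m$, so killing one degree of $\Delta$-transcendence costs precisely $\omega^m$.

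Next, I would invoke the standard Lascar rank inequality
$$RU(\bar a, \bar b / F) \leq RU(\bar a / F\langle \bar b\rangle) \oplus RU(\bar b / F),$$
where $\oplus$ denotes the Cantor natural sum. Since by hypothesis $RU(\bar b / F) < \omega^m$, the Cantor normal form of $\omega^m \cdot (N-1)$ has all its terms at exponent $m$ while that of $RU(\bar b/F)$ has all exponents strictly below $m$. Thus the natural sum collapses to ordinary ordinal sum and is strictly less than $\omega^m \cdot N$. On the other hand, $\bar a$ is a subtuple of $(\bar a, \bar b)$, so $RU(\bar a, \bar b / F) \geq RU(\bar a / F) = \omega^m \cdot N$, contradicting the previous inequality.

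The main subtlety is the ordinal arithmetic: it is essential that imposing one nontrivial $F\langle \bar b\rangle$-linear relation on $\bar a$ drops the Lascar rank by a full factor of $\omega^m$, rather than by some smaller ordinal. This is the one place where the specific structure of $DCF_{0,m}$---and in particular the equality $RU(\text{single }\Delta\text{-transcendental})=\omega^m$---is genuinely used, and it is what makes the hypothesis $RU(X)<\omega^m$ exactly tight.
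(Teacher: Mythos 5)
Your argument is correct and is essentially the paper's own proof: the intersection point forces $\bar a$ into a hyperplane of $\mathbb{P}^{N}$ defined over the point, giving $RU(\bar a/\bar b)\leq \omega^m\cdot(N-1)$, and the Lascar inequality plus the hypothesis $RU(X)<\omega^m$ contradicts genericity of $\bar a$. The only differences are that you spell out two details the paper leaves implicit, namely the nontriviality of the linear relation (via the pure monomial $x_j^d$) and the Cantor normal form computation showing the natural sum stays below $\omega^m\cdot N$.
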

\begin{proof}
Let $$H=Z( \sum_{i=1}^{\binom{d+n}{d}}a_i M_i )$$ where $\bar a$ is a generic point in $\mathbb{P}^N.$ Now, suppose that $x \in X \cap H.$ The $x$ specifies a proper subvariety of $\mathbb{P}^N,$ namely the hyperplane given by $\sum a_i M_i(x)=0$ This hyperplane is, of course, isomorphic to $\mathbb{P}^{N-1}.$ But, we know that the rank of a generic point in $\m P^{N-1}$ is not $\omega^m \cdot N,$ namely we know $RU(a/x)\leq \omega^m \cdot (N-1).$ Now, using the Lascar inequality, $$RU(a) \leq RU(a,x) \leq RU(a/x) \oplus RU(x) <\omega^m \cdot N.$$ 
So, $\bar a$ must not have been generic in $\mathbb{P}^N.$
\end{proof}

\begin{prop}\label{proj1}
Let $X \subseteq \m P^n$ be a proper $\Delta$-closed subset. Let $p \in \m P^n-X$ be generic. Suppose $RU(X) \geq \omega^m.$ Then $RU( \pi_p(X)) \geq \omega^m,$ where $\pi_p$ is projection from the point to any hyperplane not containing the point. 
\end{prop}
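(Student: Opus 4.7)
The plan is to apply the Lascar inequality to a generic point $x$ of $X$ and its image $y = \pi_p(x)$, exploiting the fact that the fiber of $\pi_p$ through $y$ is a line $L$ through $p$, and $X \cap L$ must be a proper closed subset of $L$ precisely because $p \notin X$. Concretely, I would choose $x$ realizing a type in $X$ of maximal Lascar rank, taken independent from $p$ over $F$; since non-forking extensions preserve Lascar rank, this gives $RU(x/F\langle p \rangle) = RU(X) \geq \omega^m$. Set $y = \pi_p(x) \in \pi_p(X)$.

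The geometric content of the argument is that, given $p$ and $y$, the point $x$ is forced to lie on the line $L = \overline{p y}$; consequently $x \in X \cap L$. Since $p \in L$ but $p \notin X$, $L$ is not contained in $X$, and so $X \cap L$ is a proper $\Delta$-closed subset of $L \cong \mathbb{P}^1$. Because $\mathbb{P}^1$ has Lascar rank $\omega^m$ in $DCF_{0,m}$ and any proper $\Delta$-closed subset has strictly smaller rank, one concludes $RU(x/F\langle p, y \rangle) \leq RU(X \cap L) < \omega^m$.

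Applying the Lascar inequality then yields
$$\omega^m \leq RU(x/F\langle p \rangle) \leq RU(x/F\langle p, y \rangle) \oplus RU(y/F\langle p \rangle) \leq RU(X \cap L) \oplus RU(\pi_p(X)).$$
If $RU(\pi_p(X))$ were also strictly below $\omega^m$, then the natural sum of two ordinals strictly below $\omega^m$ would, by inspection of Cantor normal forms, still be strictly below $\omega^m$, contradicting the displayed inequality; hence $RU(\pi_p(X)) \geq \omega^m$. The main non-trivial ingredient is the rank bound $RU(X \cap L) < \omega^m$, which rests on $RU(\mathbb{P}^1) = \omega^m$ together with the fact that a single non-trivial $\Delta$-equation in one variable strictly drops the rank; once this is in hand the rest is a clean application of the Lascar inequality, in the same spirit as the proof of Proposition \ref{Avoid}.
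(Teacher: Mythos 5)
Your argument is correct and takes essentially the same route as the paper's own proof: pick a generic point of $X$ independent from $p$, note that it lies on the line through $p$ and its image, that this line meets $X$ in a proper $\Delta$-closed subset of rank $<\omega^m$ because $p \notin X$, and conclude via the Lascar inequality and the additive indecomposability of $\omega^m$. If anything, your displayed inequality states the conditioning more carefully than the paper's version, but the underlying idea is identical.
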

\begin{proof} Take $b \in X$ generic (of full $RU$-rank). Then let $c=\pi_p(b).$ Since we assumed that $p \in \m P^n -X,$ we know that the intersection of the line $\bar{pb}$ and $X$ is a proper Kolchin closed subset of the affine line. Thus, $RU(b/cp)<\omega^m.$ But then $$\omega^m \leq RU(b)=RU(b/p) \leq RU(b,c/p) \leq RU(b/p) \oplus RU(c/bp).$$ Of course, this implies that $RU(c) \geq \omega^m.$ 
\end{proof}

\begin{prop} \label{proj} Let $X$ be a $\Delta$-variety with Lascar rank less than $\omega^m (k+1) .$ Then $X$ is isomorphic to a definable subset of $\m P^{2k+1}.$ 
\end{prop}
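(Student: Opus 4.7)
The plan is to iteratively project $X$ from a well-chosen point of the ambient projective space, dropping the dimension by one each time, until $X$ sits inside $\m P^{2k+1}$. So assume $X \subseteq \m P^n$ with $n \geq 2k+2$; otherwise there is nothing to prove.

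The central step is to locate a point $p \in \m P^n$ such that $\pi_p \colon X \to \m P^{n-1}$ is a definable bijection onto its image. I would take $p$ outside the secant variety $\operatorname{Sec}(X)$, defined as the $\Delta$-closure of the union of lines through pairs of (possibly equal) points of $X$: any such $p$ automatically lies off $X$, so $\pi_p$ is defined on $X$, and on no secant of $X$, so $\pi_p$ is injective. A point of $\operatorname{Sec}(X)$ is parametrized by a triple $(x_1, x_2, t)$ with $x_i \in X$ and $t$ a parameter picking out a point on $\overline{x_1 x_2}$, so the Lascar inequality yields
\[ RU(\operatorname{Sec}(X)) \leq RU(X) \oplus RU(X) \oplus \omega^m. \]
Writing $RU(X) = \omega^m \cdot k + \gamma$ with $\gamma < \omega^m$, and using that $\omega^m$ is additively indecomposable (so Hessenberg sums of ordinals below $\omega^m$ remain below $\omega^m$), I get $RU(X)\oplus RU(X) < \omega^m(2k+1)$ and therefore $RU(\operatorname{Sec}(X)) < \omega^m(2k+2) = RU(\m P^{2k+2}) \leq RU(\m P^n)$. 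Consequently $\operatorname{Sec}(X) \subsetneq \m P^n$, any generic $p \in \m P^n$ lies outside it, and $\pi_p$ realizes $X$ as a definable subset of $\m P^{n-1}$. Iterating $n-(2k+1)$ times delivers the desired embedding.

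The main obstacle is the rank estimate for $\operatorname{Sec}(X)$: one has to be careful with Hessenberg arithmetic and the additive behavior of $\omega^m$, and it is essential that the hypothesis $RU(X) < \omega^m(k+1)$ is strict so the final bound for $\operatorname{Sec}(X)$ comes out strictly below $\omega^m(2k+2)$ rather than merely at it. A secondary point, which I would address by noting that the projection is given by differential polynomials and all fibres are singletons, is that a definable bijection of this form is what is meant by "isomorphic to a definable subset" in this context.
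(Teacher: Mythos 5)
Your argument is essentially the paper's: project iteratively from a generic point of $\m P^n$, with injectivity secured by the same Lascar-inequality computation ($RU \leq RU(X)\oplus RU(X)\oplus \omega^m < \omega^m\cdot(2k+2)$) using the additive indecomposability of $\omega^m$; the paper simply runs it contrapositively on a hypothetical pair $x_1,x_2$ with $p$ on the line $\overline{x_1x_2}$ rather than packaging the secant lines into a single variety. One small caution: work with the definable union of secant lines rather than its $\Delta$-closure, since Lascar rank need not be preserved under Kolchin closure (a subtlety the paper itself flags), but the closure is not needed --- the generic $p$ only has to avoid the actual secant lines, and your pointwise rank bound already gives exactly that.
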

\begin{proof} Suppose that $X \subseteq \m P^n.$ Let $p$ be a generic point of $\m P^n.$ Let $H$ be any hyperplane not containing $p.$ We claim that projection from $p$ to $H$, restricted to $X$, is an injective map. Suppose not. Then there are two points $X_1, x_2$ on $X$, so that $p$ is on the line joining $x_1$ and $x_2.$ Then $$RU(p) \leq  RU( p, x_1, x_2) \leq RU(p / x_1 x_2) \oplus RU(x_1 x_1) < \omega^m \cdot (2k+2)$$ This is a contradiction unless $n<2k+1.$ Iteratively projecting from a generic point gives the result. 
\end{proof}
\begin{rem}
In the special case that $X$ is an algebraic variety, then this simply says that we can construct a definable isomorphism to some constructible set in $\m P^{2 dim(X) +1}.$ 
\end{rem} 
From the previous two propositions, we get,
\begin{corr} Suppose that $X$ is of Lascar rank less than $\omega^m.$ Then $X$ is definably isomorphic to a definable subset of $\m A^1.$ 
\end{corr}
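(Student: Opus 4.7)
The plan is to chain together the two previous propositions. First, I would apply Proposition \ref{proj} with $k = 0$: the hypothesis $RU(X) < \omega^m = \omega^m(0+1)$ is exactly what is needed, so that proposition gives a definable isomorphism of $X$ onto a definable subset $X' \subseteq \m P^{2\cdot 0 +1} = \m P^1$. Note that the Lascar rank is invariant under definable isomorphism, so $RU(X') < \omega^m$ as well.

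Next, I would apply Proposition \ref{Avoid} to $X' \subseteq \m P^1$, using generic hypersurfaces of degree $d=1$. A degree-$1$ hypersurface in $\m P^1$ is just a point, so a generic point $p \in \m P^1$ (i.e.\ one whose homogeneous coordinates are $\Delta$-algebraically independent transcendentals over a field of definition for $X'$) satisfies $p \notin X'$ by Proposition \ref{Avoid}.

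Finally, I would observe that $\m P^1 \setminus \{p\}$ is definably isomorphic to $\m A^1$ via any fractional linear transformation sending $p$ to the point at infinity (this is a definable isomorphism in any field, hence in particular in $\mc U$ viewed as a $\Delta$-field). Composing the embedding $X \cong X' \hookrightarrow \m P^1 \setminus \{p\}$ with this isomorphism yields the required definable isomorphism of $X$ with a definable subset of $\m A^1$.

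There is no real obstacle here: the corollary is essentially an immediate consequence of Propositions \ref{Avoid} and \ref{proj}, once one notices that in Proposition \ref{proj} the bound $\omega^m(k+1)$ with $k=0$ already pins $X$ inside $\m P^1$, so that only one further application of the generic-avoidance argument is needed to push from $\m P^1$ down to $\m A^1$.
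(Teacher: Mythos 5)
Your proposal is correct and follows essentially the same route as the paper: apply Proposition \ref{proj} with $k=0$ to embed $X$ into $\m P^1$, then use Proposition \ref{Avoid} (generic degree-one hypersurfaces, i.e.\ generic points of $\m P^1$) to see the image misses a generic point, hence lands in a copy of $\m A^1$. Your spelled-out final step (moving the avoided point to infinity by a fractional linear transformation) is exactly what the paper leaves implicit.
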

\begin{proof} Using Proposition \ref{proj}, we get an embedding of $X$ into $\m P^1.$ We know that $X$ avoids any generic point of $\m P^1$ by Proposition \ref{Avoid}. 
\end{proof}
\begin{rem} The use of Proposition \ref{Avoid} in the above proof is gratuitous, since it is clear that the projection of $X$ is a proper subset of $\m P^1,$ by simple rank computations. The proposition is less obvious when the variety is embedded in higher projective spaces. 
\end{rem}

\begin{thm} Any $\Delta$-complete set is of $RU$-rank strictly less than $\omega^m.$ 
\end{thm}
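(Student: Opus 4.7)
The plan is to prove the contrapositive: assuming $X \subseteq \m P^n$ is $\Delta$-complete with $RU(X) \geq \omega^m$, I will derive a contradiction by iterated generic projection, using exactly the three facts already in hand: $\m P^k$ is not $\Delta$-complete (for every $k \geq 1$), completeness is preserved under continuous images (Proposition \ref{basics}(1)), and generic projection does not drop $RU$-rank below $\omega^m$ (Proposition \ref{proj1}).

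First I would observe that $X \subsetneq \m P^n$, since $X$ is complete and $\m P^n$ is not. So I may pick a generic $p \in \m P^n \setminus X$ and project from $p$ onto a hyperplane $H \cong \m P^{n-1}$ avoiding $p$. The restriction $\pi_p|_X$ is continuous because the center of projection is excluded from the domain; hence by Proposition \ref{basics}(1) the image $\pi_p(X) \subseteq \m P^{n-1}$ is $\Delta$-closed and $\Delta$-complete, while Proposition \ref{proj1} yields $RU(\pi_p(X)) \geq \omega^m$.

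I would then iterate. At each stage I have a $\Delta$-complete subset of some $\m P^k$ ($k \geq 1$) of Lascar rank at least $\omega^m$; since $\m P^k$ is not $\Delta$-complete, this subset is necessarily proper, and so the projection step applies again. After $n-1$ iterations I reach a $\Delta$-complete $Y \subseteq \m P^1$ with $RU(Y) \geq \omega^m$. Non-completeness of $\m P^1$ forces $Y \subsetneq \m P^1$. But any proper $\Delta$-closed subset of $\m P^1$ has $RU$-rank strictly less than $\omega^m$: the generic type of $\m A^1$ has rank $\omega^m$, and a proper closed subset is cut out by a nontrivial $\Delta$-polynomial, strictly lowering the rank of every point. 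This contradicts $RU(Y) \geq \omega^m$.

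The one point requiring attention is confirming at each stage that the intermediate image is a proper subset of its ambient projective space, so that Proposition \ref{proj1} keeps applying. This is handled uniformly by the fact that $\m P^k$ is never $\Delta$-complete while each image is, by Proposition \ref{basics}(1). Otherwise the argument is a clean composition of results already proved in the preceding sections, and I do not foresee further technical obstacles.
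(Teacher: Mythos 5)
Your proposal is correct and follows essentially the same route as the paper: iterated projection from a generic point, using Proposition \ref{basics}(1) to keep the image $\Delta$-closed and $\Delta$-complete, Proposition \ref{proj1} to keep the rank at least $\omega^m$, and the non-completeness of $\m P^1$ (together with the fact that proper $\Delta$-closed subsets of $\m P^1$ have rank below $\omega^m$) to reach the contradiction. Your explicit check that each intermediate image is a proper subset of its ambient projective space (so that the generic center of projection can be chosen off the variety) is a point the paper leaves implicit, and it is handled correctly.
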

\begin{proof} 
Suppose $X$ is complete and of rank larger than $\omega^m.$ Projection from any generic point gives a $\Delta$-complete, $\Delta$-closed set (by Proposition \ref{basics}) of rank at least $\omega^m$ (by Proposition \ref{proj1}). Iterating the process yields such a set in $\m P^1.$ The only $\Delta$-closed subset of rank $\omega^m$ in $\m P^1$ is all of $\m P^1.$ This is a contradiction since $\m P^1$ is not $\Delta$-complete. 
\end{proof}

\begin{corr} Every $\Delta$-complete subset of $\m P^n$ is isomorphic to a $\Delta$-closed subset of $\m A^1.$ 
\end{corr}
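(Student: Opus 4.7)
The plan is to combine the preceding theorem (bounding the $RU$-rank of any $\Delta$-complete set by $\omega^m$) with the iterated generic projection machinery developed in Propositions \ref{Avoid} and \ref{proj}, then use $\Delta$-completeness to upgrade the resulting definable embedding to one whose image is actually $\Delta$-closed.

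First I would invoke the theorem to conclude $RU(X) < \omega^m$. Then, iterating projection from generic points as in the proof of Proposition \ref{proj}, I obtain a continuous injective definable map $\varphi : X \to \m P^1$. Each generic projection from a point to a hyperplane is a morphism of $\Delta$-varieties (given by ratios of linear forms), so the composite $\varphi$ is continuous.

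The key step is then the appeal to Proposition \ref{basics}(1): since $X$ is $\Delta$-complete and $\varphi$ is continuous, $\varphi(X)$ is $\Delta$-closed in $\m P^1$. This is precisely what distinguishes this corollary from the earlier one, which produced only a definable subset of $\m A^1$ as the image. Since definable maps do not increase $RU$-rank, $\varphi(X)$ still has rank less than $\omega^m$, so Proposition \ref{Avoid} supplies a generic point $p \in \m P^1$ lying outside $\varphi(X)$. Taking the affine chart $\m A^1 = \m P^1 \setminus \{p\}$, the image $\varphi(X)$ lies in $\m A^1$ and is $\Delta$-closed there, as a closed subset of $\m P^1$ meeting the open $\m A^1$ in itself.

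I do not expect serious obstacles: the substantive work has already been done in the preceding propositions and the theorem, and this corollary simply assembles them, with $\Delta$-completeness of $X$ used exactly to promote ``definable subset of $\m A^1$'' to ``$\Delta$-closed subset of $\m A^1$.'' The only small subtlety is matching the notion of ``isomorphic'' in the statement with the continuous definable bijection $\varphi$ produced by the construction; this is unproblematic since $\varphi(X)$ inherits $\Delta$-completeness from $X$ via Proposition \ref{basics}(1) and $\varphi$ is a composite of morphisms of $\Delta$-varieties.
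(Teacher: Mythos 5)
Your assembly is exactly the argument the paper intends (it leaves this corollary without an explicit proof): the theorem gives $RU(X)<\omega^m$, the projection machinery of Propositions \ref{Avoid} and \ref{proj} embeds $X$ into $\m P^1$ missing a generic point, and Proposition \ref{basics}(1) promotes the definable image to a $\Delta$-closed, $\Delta$-complete subset of the affine chart $\m A^1$. This is correct and essentially the same route as the paper's.
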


\begin{rem} More results along the lines of those in \cite{PongThesis} computing bounds on ranks of various algebraic geometric constructions on differential varieties are certainly possible, but the above results suffice our puposes here.
\end{rem}
The following two questions may or may not be distinct. For discussion of this see \cite{Jindecomposability} and \cite{SuerThesis}.
\begin{ques} \label{Q1}
Are there infinite Lascar rank $\Delta$-complete sets?
\end{ques}
\begin{ques} \label{Q2}
Are there infinite transcendence degree $\Delta$-complete sets?
\end{ques}

\section{A Valuative Criterion for $\Delta$-completeness}

The following is a proposition of van Den Dries \cite{vandenDries}, which Pong used in the case of ordinary differential fields to establish a valuative criterion for completeness. We take a similar approach here. 

\begin{prop}\label{Lou} $T$ a complete $\mc L$-theory and $\phi(v_1 \ldots v_n)$ an $\mc L$-formula without parameters. Then the following are equivalent:
\begin{enumerate}
\item There is a positive quantifier free formula $\psi$ such that $T$ proves $\forall v \phi (v) \leftrightarrow \psi(v).$
\item For any $K, L \models T$ and $f: A \rightarrow L$ an embedding of a substructure $A$ of $K$ into $ L,$ if $a \in \m A^m$ and $K \models \phi(a)$ then $L \models \phi(f(a)).$ 
\end{enumerate} 
\end{prop}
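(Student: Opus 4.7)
The direction (1) $\Rightarrow$ (2) is routine and I would dispatch it first. Positive quantifier-free formulas, being built from atomic formulas by $\wedge$ and $\vee$ alone, transfer down from $K$ to the substructure $A$ (since atomic formulas do) and up through the embedding $f$ to $L$; applying $\phi \leftrightarrow \psi$ (modulo $T$) at both ends, $K \models \phi(a)$ gives $L \models \phi(f(a))$.

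For (2) $\Rightarrow$ (1) my plan is the standard model-theoretic compactness argument. Set
\[ \Gamma(v) \;=\; \{\psi(v) : \psi \text{ positive quantifier-free and } T \vdash \phi(v) \to \psi(v)\}. \]
The goal is to show $T \cup \Gamma(v) \vdash \phi(v)$: once I have this, compactness yields finitely many $\psi_1,\dots,\psi_k \in \Gamma$ whose conjunction is a single positive quantifier-free formula $\psi$ satisfying $T \vdash \phi \leftrightarrow \psi$.

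To establish the entailment, I would take any $L \models T$ and $b \in L^n$ with $L \models \Gamma(b)$, adjoin new constants $c$, and consider
\[ \Sigma \;=\; T \cup \{\phi(c)\} \cup \{\neg \psi(c) : \psi \text{ positive quantifier-free},\; L \models \neg \psi(b)\}. \]
The key calculation is finite consistency of $\Sigma$: for any positive quantifier-free $\psi_1,\dots,\psi_k$ with $L \models \neg \psi_i(b)$, the disjunction $\bigvee_i \psi_i$ is again positive quantifier-free and is false at $b$, so it cannot lie in $\Gamma$; hence $T \not\vdash \phi(v) \to \bigvee_i \psi_i(v)$, which furnishes a model of $T \cup \{\phi(c),\neg\psi_1(c),\dots,\neg\psi_k(c)\}$. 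Compactness then supplies $K \models T$ with $a := c^K$ satisfying $\phi$, and by construction every positive quantifier-free formula true of $a$ in $K$ holds of $b$ in $L$. Therefore $a \mapsto b$ extends to a map from the substructure $\langle a \rangle_K \subseteq K$ into $L$ that preserves all positive atomic formulas, placing us in the hypothesis of (2) and yielding $L \models \phi(b)$, as required.

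The technically delicate point I anticipate is matching the map just produced to the precise notion of \emph{embedding} in (2): a map arising from a positive diagram preserves atomic formulas but need not \emph{reflect} them, so it is a priori only an injective homomorphism. The resolution, and the reason \emph{positive} quantifier-free (rather than all quantifier-free) appears in (1), is to read ``embedding'' in the sense used by van den Dries, where preservation of positive atomics suffices; alternatively one can take $L$ to be sufficiently saturated and realize the full positive type of $a$ on the nose inside $L$, converting the homomorphism into a genuine embedding of substructures without disturbing the preceding compactness step.
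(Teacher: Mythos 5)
Your argument is the standard separation-by-compactness proof of van den Dries' lemma, and it is correct in substance; note that the paper itself gives no proof of this proposition (it is simply quoted from \cite{vandenDries}), so there is no in-paper argument to compare against. Both halves of your compactness step are right: finite consistency of $\Sigma$ holds exactly because a finite disjunction of positive quantifier-free formulas false at $b$ cannot lie in $\Gamma$, and extracting a single $\psi$ from $T \cup \Gamma(v) \vdash \phi(v)$ is routine since a finite conjunction of members of $\Gamma$ is again positive quantifier-free.

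The delicate point is the one you flagged at the end, and there your first resolution is not merely an option but is forced, while the second should be dropped. The map $t^K(a) \mapsto t^L(b)$ on the substructure generated by $a$ preserves positive atomic formulas but need not reflect them and need not even be injective (so calling it an ``injective homomorphism'' is already too generous); and no saturation of $L$ can convert it into an embedding, because $b$ is a fixed realization of $\Gamma$ and $\Gamma$ carries no negative atomic information about $a$ --- to force reflection you would have to add the positive formulas true of $b$ to $\Sigma$, and finite consistency then fails. Indeed, with ``embedding'' read literally, condition (2) characterizes quantifier-free rather than positive quantifier-free definability, and the stated equivalence is false: in $DCF_{0,m}$ the formula $v \neq 0$ is preserved under embeddings of substructures between models, yet it defines a set that is not Kolchin closed and hence is not equivalent to a positive quantifier-free formula. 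So the proposition must be read with ``homomorphism'' in place of ``embedding,'' which is exactly how it is applied later in the paper: the map used in the valuative criterion is (an extension of) the quotient by the maximal ideal $\mathfrak{m}$ of a maximal $\Delta$-subring, which is far from injective. With that reading, your constructed homomorphism lands squarely in hypothesis (2), $L \models \phi(b)$ follows, and the proof is complete.
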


\begin{prop} Let $R \supseteq \m Q$ be a $\Delta$-ring. Every $\Delta$-ideal $\m R$ is contained in some prime $\Delta$-ideal. 
\end{prop}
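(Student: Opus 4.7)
The plan is to reduce to the standard Ritt–Kolchin argument that in a $\Delta$-ring over $\m Q$, every proper radical $\Delta$-ideal is the intersection of prime $\Delta$-ideals. I will read the statement as referring to proper $\Delta$-ideals, since the unit ideal is contained in no prime. The overall strategy has three steps: replace $I$ by its radical, pick a minimal prime over that radical, and verify that the derivations preserve it.

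First I would replace the given proper $\Delta$-ideal $I$ with $\sqrt{I}$, invoking the standard fact that in a $\Delta$-ring containing $\m Q$ the radical of a $\Delta$-ideal is again a $\Delta$-ideal. The idea is that if $a^n \in I$, then $\delta(a^n) = n a^{n-1} \delta a \in I$, and since $n$ is invertible in $R$, $a^{n-1}\delta a \in I$; iterating and then taking a sufficiently high power yields $(\delta a)^N \in I$, so $\delta a \in \sqrt{I}$. Thus $\sqrt{I}$ is a proper radical $\Delta$-ideal containing $I$, and it suffices to produce a prime $\Delta$-ideal containing $\sqrt{I}$.

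Next I would take a prime $\mf p \subseteq R$ minimal over $\sqrt{I}$. Such a $\mf p$ exists by Zorn's lemma applied to chains of primes containing $\sqrt{I}$, ordered by reverse inclusion. The final and crucial step is to show $\mf p$ is $\Delta$-stable. By minimality, the localization $\sqrt{I} R_{\mf p}$ is a radical ideal whose radical is the unique maximal ideal $\mf p R_{\mf p}$, hence $\sqrt{I} R_{\mf p} = \mf p R_{\mf p}$. Consequently, for each $x \in \mf p$ there is some $s \in R \setminus \mf p$ with $sx \in \sqrt{I}$. For any $\delta \in \Delta$, applying $\delta$ and using that $\sqrt{I}$ is a $\Delta$-ideal gives
$$ \delta(s) x + s\, \delta x \;=\; \delta(sx) \in \sqrt{I} \subseteq \mf p. $$
Since $x \in \mf p$, the term $\delta(s) x$ lies in $\mf p$, so $s\, \delta x \in \mf p$. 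Because $\mf p$ is prime and $s \notin \mf p$, we conclude $\delta x \in \mf p$. Thus $\mf p$ is a prime $\Delta$-ideal containing $I$.

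The genuine input is the first step: the invariance of $\sqrt{\cdot}$ under derivations, which is the standard Ritt lemma exploiting $\m Q \subseteq R$. The minimal-prime step is pure commutative algebra, and the differential verification reduces, via localization, to the one-line derivation computation above, so there is no serious obstacle once the radical-closure lemma is in hand.
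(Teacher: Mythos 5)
Your proof is correct, but it reaches the prime by a different route than the paper. The paper's (very terse) argument shares your first ingredient --- in characteristic $0$ the radical of a $\Delta$-ideal is again a $\Delta$-ideal --- but then it appeals to the standard structure theory: it passes to a maximal $\Delta$-ideal above $I$ and cites that every radical $\Delta$-ideal is an intersection of prime $\Delta$-ideals, whence some prime $\Delta$-ideal contains $I$. You instead take a prime $\mf p$ minimal over $\sqrt{I}$ and verify directly that it is $\Delta$-stable, using the localization identity $\sqrt{I}R_{\mf p}=\mf p R_{\mf p}$ (which holds since extensions of radical ideals to localizations are radical and $\mf p$ is the only prime over $\sqrt{I}$ inside $\mf p$) together with the one-line computation $\delta(sx)=\delta(s)x+s\,\delta x$. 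This route is more self-contained: it needs only the Ritt radical lemma plus elementary commutative algebra, it avoids the decomposition theorem for radical $\Delta$-ideals (whose ``finitely many primes'' form, as quoted in the paper, really requires a Ritt--Raudenbush/Noetherian-type hypothesis, whereas the general statement gives a possibly infinite intersection), and it proves a bit more, namely that minimal primes over a $\Delta$-ideal in a Ritt algebra are themselves $\Delta$-ideals. Your reading of the statement as concerning proper $\Delta$-ideals is of course the intended one, and your sketch of the radical lemma (iterating $\delta$ and multiplying to get $(\delta a)^{2n-1}\in I$) is the standard argument the paper cites without proof.
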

\begin{proof} Let $ I \subseteq R$ be a $\Delta$-ideal. There is a maximal element (with respect to containment) among the $\Delta$-ideals containing $I.$ In characteristic 0, radicals of differential ideals are differential ideals. Every radical $\Delta$-ideal is the intersection of finitely many prime $\Delta$-ideals. 
\end{proof}

The next definition is essential for the criterion we give for completeness.
\begin{defn} 
Let $K$ be a $\Delta$-field. $$H_K:= \{(A,f, L) : A \text{ is a } \Delta \text{-subring of } K, L \text{ is a } \Delta \text{-field}, f: A \rightarrow L \text{ a } \Delta \text{-homomorphism} \}.$$   
Given $(A_1, f_1 , L_1), (A_2, f_2 , L_2) \in H_K.$ Then $f_2$ extends $f_1$ if $ A_1 \subset A_2,$ $L_1 \subseteq L_2,$ and $ f_2 |_{A_1}=f_1.$ We denote this by writing $(A_1, f_1 , L_1) \leq (A_2, f_2, L_2).$ With respect to this ordering, $H_K$ has maximal elements. These will be called \emph{maximal $\Delta$-homomorphisms of $K.$}
\end{defn}

\begin{defn} A $\Delta$-subring is maximal if it is the domain of a maximal $\Delta$-homomorphism. A $\Delta$-ring is called a local $\Delta$-ring if it is local and the maximal ideal is a $\Delta$-ideal. 
\end{defn}

\begin{prop}\label{algebra} Let $(R,f,L)$ be maximal in $H_K.$ Then, 
\begin{enumerate}
\item $R$ is a local $\Delta$-ring and $ker(f)$ is the maximal ideal.
\item $x \in K-R$ if and only if $\mf m\{x \} = R\{x \}.$ 
\end{enumerate}
\end{prop}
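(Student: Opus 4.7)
For part (1), the key observation is that $\ker(f)$ is automatically a prime $\Delta$-ideal of $R$ (prime because $L$ is a field, a $\Delta$-ideal because $f$ is a $\Delta$-homomorphism), so to show $R$ is local with $\ker(f)$ as its unique maximal ideal it is enough to show every element $r \in R$ with $f(r) \neq 0$ is already a unit in $R$. Given such $r$, the subring $R[r^{-1}]$ of $K$ is in fact a $\Delta$-subring because $\delta(r^{-1}) = -\delta(r)\, r^{-2} \in R[r^{-1}]$, and one can extend $f$ to a $\Delta$-homomorphism $g\colon R[r^{-1}] \to L$ by sending $r^{-1} \mapsto f(r)^{-1}$. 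By the maximality of $(R,f,L)$ in $H_K$ we must already have $R[r^{-1}] = R$, i.e.\ $r^{-1} \in R$. It then follows that $\ker(f)$ contains every non-unit and so equals the unique maximal ideal $\mathfrak{m}$.

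For part (2), the backwards direction is immediate: if $x \in R$ then $R\{x\} = R$ and $\mathfrak{m}\{x\} = \mathfrak{m}$, which is a proper ideal. For the forward direction, assume $x \in K \setminus R$ and suppose toward a contradiction that $\mathfrak{m}\{x\} \subsetneq R\{x\}$. By the previous proposition, $\mathfrak{m}\{x\}$ is contained in some prime $\Delta$-ideal $\mathfrak{p}$ of $R\{x\}$; the quotient $R\{x\}/\mathfrak{p}$ is then a $\Delta$-integral domain whose field of fractions $L^{*}$ is a $\Delta$-field. The composition $R \to R\{x\} \to L^{*}$ kills $\mathfrak{m}$, so factors through an embedding $R/\mathfrak{m} \hookrightarrow L^{*}$, while $f$ factors as $R \twoheadrightarrow R/\mathfrak{m} \hookrightarrow L$ by part (1). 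The plan is to find a common $\Delta$-field $L' \supseteq L$ together with an embedding $L^{*} \hookrightarrow L'$ compatible with these two copies of $R/\mathfrak{m}$; then $R\{x\} \to L^{*} \hookrightarrow L'$ is a $\Delta$-homomorphism extending $f$, and since $x \in R\{x\} \setminus R$ this contradicts the maximality of $(R,f,L)$.

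The main obstacle is therefore the construction of $L'$. My approach is to form the $\Delta$-ring $L \otimes_{R/\mathfrak{m}} L^{*}$ (whose derivations extend uniquely by the Leibniz rule), apply the previous proposition to pick a prime $\Delta$-ideal $\mathfrak{q}$ of it, and set $L' := \operatorname{Frac}\!\bigl((L \otimes_{R/\mathfrak{m}} L^{*})/\mathfrak{q}\bigr)$. Since $L$ and $L^{*}$ are fields, each maps injectively into $L'$ (any nonzero element has nonzero image, because $1 \mapsto 1$), and the two resulting maps agree on $R/\mathfrak{m}$ by the universal property of the tensor product. Once this is in place, the induced $\Delta$-homomorphism $R\{x\} \to L'$ witnesses a strict extension of $(R,f,L)$, completing the contradiction and thus the proof of (2).
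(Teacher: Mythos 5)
Your proof is correct and follows essentially the same route as the paper: for (1) extend $f$ to $R[r^{-1}]$ and invoke maximality, and for (2) pass to a prime $\Delta$-ideal containing $\mathfrak{m}\{x\}$, take the fraction field, and amalgamate with $L$ over $R/\mathfrak{m}$ to contradict maximality. The only difference is presentational: where the paper merely sketches the amalgamation with commutative diagrams, you make it explicit via a prime $\Delta$-ideal of $L \otimes_{R/\mathfrak{m}} L^{*}$, which is a legitimate (and welcome) filling-in of that step.
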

\begin{proof}
Kernels of $\Delta$-homomorphisms are $\Delta$-ideals. Let $x \notin ker(f).$ Then we extend $f$ to $R_{(x)}$ by letting $x^{-1} \mapsto f(x)^{-1}.$ By maximality, $x^{-1} \in R.$ Thus, $x$ is a unit.\\
If $\mf m \{x \} = R\{x \}$ then $1 = \sum_{i=1}^k m_i p_i(x).$ Then if $x \in R,$ $f(1)= f(\sum m_ip_i(x))=0.$ 
If $\mf m \{x \} \neq R \{x \},$ then there is a prime $\Delta$-ideal $\mf m'$ which contains $\mf m \{x \}.$ So, we let $K'$ be the fraction field of $R \{x \} /\mf m'$. 
$$
\xymatrix{
& L' \ar[rd] \ar[ld] &  \\
 K' \ar[rd] & & L \ar[ld] \\
&  K &  
}$$
 Further, we have: 
$$
\xymatrix{
R \{x\} \ar[r] & K' \ar[r] & L'  \\
R \ar[r] \ar[u] & K \ar[r] \ar[u] & L \ar[u]
}$$

But, one can see that the maximality condition on $R$ means that $x \in R.$ 
\end{proof}

\begin{thm}
Let $X$ be a $\Delta$-closed subset of $\m A^n$ Then $X$ is $\Delta$-complete if and only if for any $K \models T$ and any $R,$ a maximal $\Delta$-subring of $K$ containing $F,$ we have that every $K$-rational point of $X$ is and $R$-rational point of $X$. 
\end{thm}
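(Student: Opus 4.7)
The plan is to treat both directions via Proposition \ref{Lou}. By the reduction noted after Proposition \ref{basics}, $X$ is $\Delta$-complete if and only if for every $m$ and every $\Delta$-closed $W \subseteq X \times \mathbb{A}^m$, the image $\pi_2(W) \subseteq \mathbb{A}^m$ is $\Delta$-closed. Equivalently, the existential formula $\phi_W(y) := \exists x\,((x,y) \in W)$ is equivalent, modulo the theory $T = DCF_{0,m}$ (expanded by constants for $F$, so parameters cause no trouble), to a positive quantifier-free formula. Proposition \ref{Lou} recasts this as preservation of $\phi_W$ along every $\Delta$-embedding $f : A \to L$ of a $\Delta$-subring $A \subseteq K \models T$, and this is what I will compare against the valuative condition.

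For the $(\Leftarrow)$ direction, assume the valuative criterion. Given $K, L \models T$, a $\Delta$-subring $F \subseteq A \subseteq K$, a $\Delta$-embedding $f : A \to L$, and $b \in A^m$ with $K \models \phi_W(b)$, pick $a \in K^n$ such that $(a, b) \in W$. Zorn's lemma extends $(A, f, L)$ to a maximal $(R, \tilde f, \tilde L) \in H_K$ with $A \subseteq R$ and $L \subseteq \tilde L$. By hypothesis, $a \in R^n$; since $b \in A \subseteq R$, applying $\tilde f$ to the $\Delta$-polynomials defining $W$ yields $(\tilde f(a), f(b)) \in W(\tilde L)$, so $\tilde L \models \phi_W(f(b))$. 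Embedding $\tilde L$ into a model $M$ of $T$ containing $L$, model completeness of $DCF_{0,m}$ gives $L \preceq M$, and preservation of existential formulas produces $L \models \phi_W(f(b))$.

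For the $(\Rightarrow)$ direction, assume $X$ is $\Delta$-complete. Let $(R, f, L) \in H_K$ be maximal with $F \subseteq R$ and let $a \in X(K)$. Suppose for contradiction that some $a_i \notin R$. Proposition \ref{algebra}(2) supplies $m_1, \ldots, m_k \in \mathfrak{m} = \ker f$ and $p_1, \ldots, p_k \in R\{y\}$ with $\sum_j m_j p_j(a_i) = 1$. Set $q(x, z) := 1 - \sum_j z_j p_j(x_i)$ and $W := \{(x, z) \in X \times \mathbb{A}^k : q(x, z) = 0\}$, so that $(a, m_1, \ldots, m_k) \in W(K)$ and hence $(m_1, \ldots, m_k) \in \pi_2(W)$. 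By completeness, $\pi_2(W)$ is $\Delta$-closed, cut out by some $r_1, \ldots, r_s \in R\{z\}$; then $r_j(m) = 0$, and applying $f$ gives $r_j^f(0) = 0$ in $L$, where $r_j^f$ denotes $r_j$ with $f$ applied to coefficients. Since completeness is preserved under base change along $f|_F$, the polynomials $r_j^f$ cut out $\pi_2(W^f)$ over $L$, so $(0, \ldots, 0) \in \pi_2(W^f)$ and there exists $x$ in a $\Delta$-closed extension of $L$ with $(x, 0) \in W^f$. But $q^f(x, 0) = 1$, a contradiction.

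The main obstacle is in the $(\Rightarrow)$ direction: one must verify that the defining equations of $\pi_2(W)$ transfer correctly through $f$, identifying $V(r_1^f, \ldots, r_s^f)$ with $\pi_2(W^f)$. This is exactly where $\Delta$-completeness does its work, since without it the image $\pi_2(W^f)$ need not even be $\Delta$-closed, let alone cut out by the expected equations. The $(\Leftarrow)$ direction is technically lighter, with the only subtlety being that $\tilde L$ need not itself be a model of $T$, which is harmless by model completeness of $DCF_{0,m}$.
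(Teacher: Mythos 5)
Your left-to-right direction (valuative criterion implies completeness) is fine and is essentially the paper's argument: extend $f$ to a maximal $\Delta$-homomorphism by Zorn, use the hypothesis to see the witness lies in $R$, push it forward, and recover satisfaction in $L$ via Proposition \ref{Lou}; your explicit appeal to model completeness to descend from the larger model to $L$ is a point the paper leaves implicit.

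The other direction has a genuine gap at the step ``since completeness is preserved under base change along $f|_F$, the polynomials $r_j^f$ cut out $\pi_2(W^f)$ over $L$.'' The set $W$ and the alleged defining polynomials $r_1,\dots,r_s$ have coefficients in $R$, not in $F$, and $f$ is \emph{not} injective on $R$: its kernel is $\mathfrak m$, which contains precisely the $m_j$ you are specializing to $0$. So passing from $K$ to $L$ is a specialization, not a base change, and the identity $V(r_1,\dots,r_s)=\pi_2(W)$ over $K$ gives no control over what happens after applying $f$ to the coefficients. Completeness of $X$ (or of its conjugate over $f(F)$) tells you $\pi_2(W^f)$ is closed, i.e.\ cut out by \emph{some} equations over $L$, but not that these are the $r_j^f$; and the inclusion you actually use, $V(r_1^f,\dots,r_s^f)\subseteq\pi_2(W^f)$, is a ``points still lift after specialization'' statement for which no algebraic certificate is available --- indeed it is exactly the kind of statement that fails in general, and your own contradiction shows it cannot hold in this situation, so it must not be used as an inference. (A secondary, fixable issue: that the $r_j$ may be chosen in $R\{z\}$ rather than merely over some subfield of $K$ needs a field-of-definition argument plus clearing denominators.)

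The paper avoids this entirely by running Proposition \ref{Lou} in this direction as well: if $\pi_2 Z$ were $\Delta$-closed, membership in it would be given by a positive quantifier-free formula (with the coefficients of the $t_j$ absorbed into the tuple over which one quantifies, so that all parameters lie in the domain $R$ of the quotient map $g:R\to L$), and positive quantifier-free formulas are automatically preserved by $\Delta$-homomorphisms. Preservation visibly fails here, since $K\models\exists y\,\bigl(\sum_j m_j t_j(y_i)=1 \wedge y\in X\bigr)$ while the image of this instance under $g$ asks for $0=1$. That failure, via \ref{Lou}, shows $\pi_2 Z$ is not $\Delta$-closed, contradicting completeness --- no transfer of the specific defining equations of $\pi_2 Z$ through $f$ is ever needed. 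Replacing your equation-transfer step by this appeal to \ref{Lou} (together with Proposition \ref{algebra}(2), which you do use correctly to produce the relation $\sum_j m_j p_j(a_i)=1$) repairs the argument and is exactly the paper's route.
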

\begin{proof} Suppose the condition on maximal $\Delta$-subrings holds for $X$. Then, to show that $X$ is $\Delta$-complete, it suffices to show that for any $\Delta$-closed set $Z \subseteq X \times \m A^n,$ $\pi_2(Z)$ is $\Delta$-closed. Given $K$ and $f$ with $K,L \models DCF_{0,m},$ let $f: A \rightarrow L$ be a $\Delta$-homomorphism where $A$ is a substructure of $K.$ We let $\phi(y)$ be the formula saying $y \in \pi_2 Z.$ We will show that if $a \in \m A^n$ and $K \models \phi(a)$ then $L \models \phi(f(a)).$ So, we assume there is a $x \in X(K)$ with $(x,a) \in Z.$ Now, extend $f$ to $\tilde f: R \rightarrow L'$ a maximal $\Delta$-homomorphism. One can always assume that $L' \models DCF_{0,m},$ since if this does not hold, we simply take the $\Delta$-closure. At this point, we have $F \subseteq A \subseteq R.$ By the assumption, we know already that $x \in R.$ 
$$L' \models  (\tilde f(x), \tilde f(a)) \in Z \wedge \tilde f(x) \in X.$$ 
So, $L' \models \tilde f(a) \in \pi_2Z.$ But, then $L \models f(a) \in \pi_2 Z,$ since $\tilde f(a)= f(a).$  Now, using van Den Dries' condition, \ref{Lou}, we can see that $\pi_2 Z$ is $\Delta$-closed. \\

Now, we suppose that the valuative criterion does not hold of $X.$ So, we have some $f : R \rightarrow L$ a maximal $\Delta$-homomorphism of $K,$ with $R \supseteq F.$ There is some point $x \in X(K),$ so that $x \notin R.$ Then for one of the elements $x_i$ in the tuple $x,$ we know, by \ref{algebra} that $1 \in \mf m \{x_i \}.$ Then, we know that there are $m_j \in \mf m$ and $t_j \in R\{y\}$ so that $$\sum_j m_j t_j(x) =1.$$ We let $m:=(m_1 \ldots m_k).$  So, we let $\phi(y,z)$ be the formula which says: 
$$\sum_j z_j t_j(y_i)-1=0 \wedge y \in X.$$
Then we take $L$ to be the $\Delta$ closure of the $\Delta$-field $R/ \mf m.$ If we let $g$ be the quotient map then $g|_F$ is an embedding. Then we have that $K \models \exists y \phi(y, m)$ since $x$ is a witness. But, we see that $L$ can not have a witness to satisfy this formula, $m \in ker(f).$ Then again, by \ref{Lou}, we see that $\pi_2 Z$ is not $\Delta$-closed.  
\end{proof}

One can rephrase the criterion for an affine $\Delta$-closed subset $X$, a fact noticed by Pong, in the ordinary case. For any $K \models DCF_{0,m}$, let $R$ be a maximal $\Delta$-subring of $K$ which contains $F.$ Let $A = K\{y_1 , \ldots , y_n \} / I(X).$ Suppose we are given a commutative diagram, 
$$
\xymatrix{
 Spec K \ar[d] \ar[r] & Spec A \ar[d] \\
 Spec R \ar[r] & Spec F\\
}
$$
then we have the diagonal morphism, 
$$
\xymatrix{
 Spec K \ar[d] \ar[r] & Spec A \ar[d] \\
 Spec R \ar[r] \ar@{.>}[ur] & Spec F\\
}
$$
\section{New $\Delta$-complete varieties}
In this section, we will use some commutative algebra along with the valuative given above to give examples of $\Delta$-complete sets.
\begin{thm}
Let $V$ be the $\Delta$-closure of the zero set of $\{ \delta_i y - P_i(y) \} _{i=1} ^m $ in $\m P^1.$ Then $V$ is $\Delta$-complete.
\end{thm}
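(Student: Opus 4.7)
The plan is to apply the valuative criterion established in the previous section. Fix $K \models DCF_{0,m}$, a maximal $\Delta$-subring $R \supseteq F$ of $K$ with maximal ideal $\mathfrak{m}$ and associated $\Delta$-homomorphism $f \colon R \to L$, and a $K$-point $a$ of $V$. I will show $a$ is an $R$-point of $V$. Covering $V \subseteq \mathbb{P}^1$ by its two affine charts, it suffices to consider the case where $a$ lies in the main chart, so $a \in K$ satisfies $\delta_i a = P_i(a)$ for every $i$; the task reduces to showing $a \in R$.

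Suppose for contradiction $a \notin R$. Because each $\delta_i a = P_i(a)$ lies in $R[a]$ and this property is preserved under iterated derivation, $R\{a\} = R[a]$. Proposition~\ref{algebra} then yields $\mathfrak{m}\{a\} = R\{a\}$, which unfolds to $\mathfrak{m} R[a] = R[a]$ and supplies a polynomial relation
\[
1 = m_0 + m_1 a + \cdots + m_n a^n, \qquad m_i \in \mathfrak{m}.
\]
Setting $b := 1/a$, dividing by $a^n$, and using that $1 - m_0$ is a unit of $R$, one obtains a monic integral equation $b^n - q_1 b^{n-1} - \cdots - q_n = 0$ over $R$ with every $q_i \in \mathfrak{m}$. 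Hence $R[b]$ is a finitely generated $R$-module, and by Nakayama's lemma $\mathfrak{m} R[b] + bR[b] \neq R[b]$, yielding a ring homomorphism $\phi \colon R[b] \to R/\mathfrak{m} \hookrightarrow L$ extending $f$ with $\phi(b) = 0$.

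The desired contradiction comes from the defining equations of $V$ in the other projective chart. Writing $P_i = \sum_{j=0}^{d_i} c_{i,j}\, y^j$ with $d_i = \deg P_i$, the identity
\[
b^{d_i-2}\, \delta_i b + \sum_{j=0}^{d_i} c_{i,j}\, b^{d_i - j} = 0
\]
holds in $K$, and its left-hand product lies in $R[b]$ (being equal to $-\sum_j c_{i,j}\, b^{d_i - j}$ there). Applying $\phi$ directly to this polynomial leaves only the constant term $-c_{i, d_i}$, while the alternative evaluation---using $b^{d_i-2} \in \ker \phi$ for $d_i \geq 3$, or the $\Delta$-compatibility of $\phi$ for $d_i = 2$ (which gives $\phi(\delta_i b) = \delta_i \phi(b) = 0$)---yields $0$. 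Hence $c_{i, d_i} = 0$ in $L$, contradicting that the nonzero leading coefficient $c_{i, d_i}$ lies in $F \setminus \{0\} \subseteq R \setminus \mathfrak{m}$. For $d_i \leq 1$, $V$ already contains the point at infinity and $\phi$ itself encodes an $R$-point of $V$ there, so no contradiction is sought. Thus $a \in R$, verifying the valuative criterion and establishing $\Delta$-completeness of $V$.

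The main obstacle lies in carrying through the two evaluations of $\phi$ on $b^{d_i-2}\, \delta_i b$ consistently: one as a polynomial value $-c_{i, d_i}$ via the defining identity, the other as $0$ via the factor $b^{d_i-2} \in \ker \phi$ combined with either degree considerations or $\Delta$-compatibility. Because $\delta_i b$ need not itself belong to $R[b]$ when $d_i \geq 3$, interpreting the element as a bona fide product in $R[b]$ requires care and hinges on the integrality of $b$ together with the local structure of $R[b]$ as a finite $R$-algebra.
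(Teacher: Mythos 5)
Your setup (valuative criterion, reduction to the affine chart, passing from the relation $1=\sum_j m_j a^j$ to the integrality of $b=a^{-1}$ over $R$) parallels the paper, but the step that is supposed to deliver the contradiction is not valid, and it is exactly the point where the differential structure has to be used. Your $\phi\colon R[b]\to R/\mathfrak m\hookrightarrow L$ is produced by Nakayama as a \emph{ring} homomorphism only. For $d_i\ge 3$ you evaluate $\phi$ on $b^{d_i-2}\,\delta_i b$ "as a product," using $b^{d_i-2}\in\ker\phi$; but $\delta_i b\notin R[b]$ in general, so multiplicativity of $\phi$ does not apply to this factorization, and the inference "the product lies in $R[b]$ and one factor lies in $\ker\phi$, hence the image is $0$" is false: $b\in\ker\phi$ and $b\cdot b^{-1}=1\in R[b]$, yet $\phi(1)=1$. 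For $d_i=2$ you instead invoke "$\Delta$-compatibility of $\phi$," i.e.\ $\phi(\delta_i b)=\delta_i\phi(b)$, but nothing guarantees this: $\ker\phi=\mathfrak m R[b]+bR[b]$ (or a maximal ideal over it) need not be a $\Delta$-ideal, and arranging a \emph{differential} homomorphism through which $b$ maps to something controlled is precisely the content of the theorem, so assuming it begs the question. You flag this obstacle yourself at the end, but it is not an obstacle of exposition; it is the missing proof. In addition, your argument produces nothing when all $P_i$ are linear (the paper treats that case separately, reducing to the constants of some derivations via stable embeddedness), and the paper's criterion is stated for affine $\Delta$-closed sets, so one must first check that the projective closure adds no point at infinity (done in the paper by homogenizing and evaluating at $[1,0]$), rather than "covering by two charts."

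For comparison, the paper closes the gap differently: it first quotes Blum's theorem on monic elements over a maximal $\delta$-subring to handle the case $a^{-1}\in R$ (monicity of $c^{-1}(\delta_i y-P_i(y))$ for an $i$ with $\deg P_i\ge 2$ gives $a^{-1}\notin\mathfrak m$, hence $a\in R$); when $a^{-1}\notin R$ it takes the unit relation in $\mathfrak m\{a^{-1}\}$, which involves both positive and negative powers of $a$, and repeatedly applies $\delta_i$, using that $\mathfrak m$ is a $\Delta$-ideal and that the leading coefficient of $P_i$ is a unit of $R$, to eliminate the positive powers and conclude $1\in\mathfrak m[a^{-1}]$; then the Atiyah--Macdonald fact that $\mathfrak m[a]$ and $\mathfrak m[a^{-1}]$ cannot both be the unit ideal forces $\mathfrak m\{a\}=\mathfrak m[a]\neq R[a]=R\{a\}$, hence $a\in R$ by Proposition~\ref{algebra}. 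If you want to salvage your line of argument, replace the two-way evaluation of $\phi$ by such a degree-reduction computation on your relation $1=\sum_j m_j a^j$ (differentiating with a $\delta_i$ for which $\deg P_i\ge 2$); this yields $1\in\mathfrak m[b]$, which together with $1\in\mathfrak m[a]$ contradicts the Atiyah--Macdonald lemma without ever needing $\phi$ to respect the derivations.
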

\begin{proof}
First, if some of the $P_i$ are linear, we should note that by  an appropriate linear translation of the field of constants of $\delta_i$ that the theorem reduces to the analogous theorem in $DCF_{0,m_1}$ for $m_1 <m.$ This follows because in a partial differentially closed field, any definable set is stably embedded (for a general discussion, see \cite{GST}). So, definability in the field of constants of any derivation is the same a definability in the structure with the language missing that derivation. Completeness of the projective closure of the constant field $C_F$ (indeed, completeness of the $C_F$ rational points of any complete variety, in the algebraic category), follows by the completeness of projective space in the algebraic category. So, for the rest of the proof, assume that at least one of the polynomials $P_i$, is of degree greater than 1.
 
One should homogenize $\delta_i y -P_i y$ in $\m P^1$ to calculate the $\Delta$-closure. If $deg P_i$ is $d_i$ then $H_i ( y, y_1) = y_1 ^d (\delta_i(\frac{y}{y_1})- P_i(\frac{y}{y_1}))$ is homogeneous. One can easily see (by examining the leading monomial of $P_i$) that $[1,0] \notin Z(H_i(y,y_1)).$ Thus, the set $V$ in affine space is equal to its projective closure. Now, we can use the valuative criteria to establish the completeness of the variety. \\

Let $x \in V(K)$ and $R$ a maximal $\Delta$-subring containing $F.$ It is enough to show that $\mf m_K \{x \} \neq R \{x \}.$ Since $\delta_i x = P_i(x),$  $$\mf m\{ x \} = \mf m[x]$$
$$R \{x \} = R[x].$$ 
So, since by classical commutative algebra, either $\mf m[x]$ or $\mf m[x^{-1}]$ is the unit ideal, see \cite{AtiyahMac}, it is enough to show that $\mf m[x^{-1}]$ is the unit ideal. Now, we take the approach of Pong \cite{PongDiffComplete2000}, using a result of Blum \cite{BlumExtensions} which is also contained in \cite{MorrisonSD}. 

\begin{defn} Let $R$ be a $\delta$ ring. An element of $R \{y \} $ is monic if it is of the form $y^n+ f(y)$ where the total degree of $f$ is less than $n.$ An element in a $\delta$-field extension of $R$ is monic over $R$ if it is the zero of a monic $\delta$-polynomial. 
\end{defn}

\begin{prop} If $(R, \mf m)$ is a maximal $\delta$-subring of $K,$ then $x \in K$ is monic over $R$ if and only if $x^{-1} \in \mf m.$ 
\end{prop}
\begin{rem} Though the results of Blum and Morrison are in the context of ordinary differential algebra, Proposition \ref{algebra} lets us apply their results, since a local $\Delta$-ring is a local $\delta$-ring for any $\delta \in \Delta.$ 
\end{rem}

Since $deg P_i \geq 2,$ $$a^{-1} (\delta_i y - P_i(y))$$ is monic, where $a$ is the coefficient of the leading monomial of $P_i.$ $x$ monic implies $x^{-1} \notin \mf m$,  by Blum's theorem \cite{Blum}.

So, if $x^{-1} \in R,$ then $x \in R.$ Thus, we assume $x^{-1} \notin R.$ Then $\mf m \{ x^{-1} \}$ is the unit ideal. $$\delta_i(x^{-1})= -x^{-2} P_i(x).$$
This means that we can get some expression, \begin{eqnarray}\label{eqninduct} 1= \sum_{j=r}^s m_j x^j \end{eqnarray} with $m_j \in \mf m$ and $r$ and $s$ integers. 
Then, applying $\delta_i$ to both sides of the equation yields: 
$$1= \sum_{j=r}^s \delta_i (w_j) x^j+ \pd{x^j}{x}P_i(x)$$
The leading term of the sum is $a m_s s x^{s-1} x^{d}.$  But, $a$ is a unit and so we can divide both sides of the equation by $sa x^{d-1}$ to obtain an expression for $m_s x^s$ as a sum of lower degree terms. Substituting this expression into \ref{eqninduct}, we get an expression \begin{eqnarray} 1= \sum_{j=r_1}^{s_1} n_j x^j \end{eqnarray}
Continuing in this manner, one can assume that we have an expression of the form 
$$1=\sum _{j=r}^0 w_j x^j.$$ 
Then we see that $1 \in \mf m[x^{-1}].$
\end{proof}

\begin{rem} Beyond order 1 the techniques as  shown above are not as easy to apply. For techniques in that situation, at least in the case of linear ordinary differential varieties, see \cite{SimmonsLinear}. One can combine the above techniques of that paper with the above techniques to give a wider class of complete partial differential varieties.
\end{rem}

\section{Acknowledgements}
I would like to thank Dave Marker and William Simmons for many helpful discussions on this topic. I would also like to thank the participants of the Kolchin seminar and Alexey Ovchinnikov in particular for useful discussions and encouragement.

\bibliography{Research}{}
\bibliographystyle{plain}

\end{document}